\documentclass[12pt]{amsart}

\newtheorem{theorem}{Theorem}

\newtheorem{conjecture}{Conjecture}
\newtheorem{lemma}{Lemma}

\newtheorem{definition}{Definition}

\begin{document}
\author{Gao Mou}\thanks{School of Physical and Mathematical Sciences, Nanyang Technological University, Singapore. Email: gaom0002@e.ntu.edu.sg}
\title{$2$-walks in $2$-tough $2K_2$-free graphs}
\begin{abstract}
We prove that every 2-tough $2K_2$-free graph admits a 2-walk.
\end{abstract}

\maketitle

\section{Introduction}

Let $G$ be a graph with vertex set  $V(G)$ and edge set $E(G)$. For any $v\in
V(G)$, let $N_G(v)$ denote the set of neighbors of $v$ in $G$. A $k$-walk of
$G$ is a spanning closed walk of $G$ visiting each vertex at most $k$ times. An
$1$-walk of $G$ is a Hamiltonian cycle in $G$. For an integer $m$, denote by $m*G$ the multigraph obtained from $G$ by taking each edge $m$ times. 
Obviously, a $k$-walk in $G$, is also a subgraph of $(2k)*G$.
Let $\Omega(G)$ denote the number of connected components of $G$. 
The following terminology, due to V.~Chv\'{a}tal \cite{chvatal1973tough}, turned out to be very important in the research of Hamiltonicity.
\begin{definition}
$G$ is $\beta$-{\em tough},
for a positive real $\beta$, if $\Omega(G-S)>1$ implies $|S|\ge \beta\cdot \Omega(G-S)$ 
for each $S\subset G$.
\end{definition}
That is, $G$ is $\beta$-tough if
$G$ cannot be split into $k$ (with $k>1$) components by removing less than
$k\beta$ vertices.  The {\em toughness} of $G$, denoted $\tau(G)$, is the maximum
value of $\beta$ for which $G$ is $\beta$-tough.  Clearly, if $g$ is
Hamiltonian, then $G$ is 1-tough, however, the converse is not true.  A famous
conjecture of V.~Chv\'{a}tal \cite{chvatal1973tough}, which is still open, claims
that the converse holds at least in an approximate sense.
\begin{conjecture}\label{conj1} There exists a constant $\beta$ such that every
$\beta$-tough graph is Hamiltonian.  \end{conjecture}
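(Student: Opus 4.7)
The statement is V.~Chv\'atal's toughness conjecture, a central open problem since 1973, so any proof plan is necessarily speculative; I will only sketch the natural lines of attack. The plan is to argue by contradiction on a vertex-minimal non-Hamiltonian graph $G$ with $\tau(G)\ge\beta$ for some absolute constant $\beta$ to be optimized. From $\tau(G)\ge\beta$ alone one immediately gets $\kappa(G)\ge 2\beta$ and $\delta(G)\ge\beta+1$, so small-scale obstructions such as cut vertices, low-degree vertices, or narrow cut-sets are automatically excluded; the real work lies in converting these global inequalities into a local structural contradiction.

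First I would fix a longest cycle $C$ in $G$ and, assuming $V(G)\setminus V(C)\ne\emptyset$, pick a component $H$ of $G-V(C)$. Classical P\'osa-type rotation on $C$ produces a large family of ``mobile endpoints'' in $C$ each of which can be joined into $H$; combining this with the toughness inequality $|S|\ge\beta\cdot\Omega(G-S)$ for the cut $S=N_G(V(H))\cap V(C)$ yields both an upper bound on the number of components outside $C$ and a lower bound on how densely the attachment points sit on $C$. The goal is to choose $\beta$ large enough that the rotation set and the attachment set must overlap in a way that allows a local reconfiguration producing a longer cycle through some vertex of $H$, contradicting the maximality of $C$.

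A complementary line is to try to imitate the Bondy--Chv\'atal closure: define $G^{*}$ by adjoining any pair of non-adjacent vertices $u,v$ whose neighborhoods fail a toughness-derived Ore-type inequality, verify that $\tau(G^{*})\ge\tau(G)$ and that $G^{*}$ is Hamiltonian iff $G$ is, and iterate until $G^{*}$ is complete or lands in a previously solved class (such as the class treated later in this paper, once the $2$-walk conclusion is upgraded to a Hamilton cycle). The principal obstacle, and the reason the conjecture has withstood decades of attack, is that toughness is a global min--max parameter over all vertex cuts, whereas every known Hamiltonicity closure is driven by pairwise local degree-sum conditions. The Bauer--Broersma--Veldman constructions of non-Hamiltonian graphs with toughness tending to $9/4$ show that no $\beta<9/4$ can suffice, so any successful proof must genuinely exploit a structural gap between $(9/4)$-toughness and $\beta$-toughness that no extremal or closure argument has yet managed to detect.
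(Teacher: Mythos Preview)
The paper does not prove this statement at all: Conjecture~\ref{conj1} is quoted in the introduction as Chv\'atal's famous toughness conjecture and is explicitly described as ``still open.'' There is therefore no proof in the paper to compare your proposal against; the paper only uses the conjecture as motivation before restricting attention to the much narrower setting of $2K_2$-free graphs and the weaker conclusion of a $2$-walk.

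You yourself acknowledge that the conjecture is open, and your proposal is accordingly not a proof but a survey of standard heuristics (longest-cycle plus P\'osa rotation, closure in the Bondy--Chv\'atal style). The genuine gap is exactly the one you name at the end: none of these lines actually closes. The rotation argument in your first paragraph does not, for any fixed $\beta$, force the rotation set and the attachment set to overlap in a way that extends $C$; if it did, the conjecture would already be a theorem. Likewise no toughness-preserving closure operation is known that terminates in a complete graph or in a class where Hamiltonicity is settled. So while your write-up is an honest account of why the problem is hard, it is not a proof proposal in any meaningful sense, and there is nothing in the paper that goes beyond what you have written --- the authors simply state the conjecture and move on.
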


The concept of a $k$-walk is a generalization of the concept of a Hamiltonian cycle; in \cite{jackson1990k} B.~Jackson and N.C.~Wormald investigated $k$-walks and obtained the following results.
\begin{theorem}\label{thm1}
Every $1/(k-2)$-tough graph has a $k$-walk. 
In particular every 1-tough graph has a 3-walk. \qed
\end{theorem}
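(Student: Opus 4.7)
The plan is to reduce the theorem to a spanning-tree statement of S.~Win (1989), and to derive the $k$-walk by doubling tree edges. I would prove the following two ingredients and combine them.

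\emph{Ingredient A (the reduction; easy).} If $G$ has a spanning tree $T$ with $\Delta(T)\le k$, then $G$ has a $k$-walk. Indeed, doubling every edge of $T$ produces a connected Eulerian multigraph $2T$; any closed Eulerian tour $W$ of $2T$ spans $V(G)$ and visits each vertex $v$ exactly $\deg_T(v)\le k$ times, so $W$ is a $k$-walk of $G$.

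\emph{Ingredient B (Win's theorem; the content).} Every $1/(k-2)$-tough graph has a spanning tree of maximum degree at most $k$. I would prove this by an extremal / edge-exchange argument. Taking $S=\emptyset$ in the definition of toughness shows that $G$ is connected, so spanning trees exist; pick one $T$ lexicographically minimising the decreasing sequence of degrees $\ge k+1$. Suppose for contradiction that $\deg_T(v_0)\ge k+1$ for some $v_0$, and let $T_1,\dots,T_d$ be the $d=\deg_T(v_0)$ components of $T-v_0$. For any chord $xy\in E(G)\setminus E(T)$ with $x\in T_i$, $y\in T_j$, $i\ne j$, swapping $xy$ for the tree edge at $v_0$ on the unique $v_0$--$y$ path strictly decreases $\deg_T(v_0)$; extremality forces every such swap to push some other vertex above degree $k$. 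Careful bookkeeping on these ``blocker'' vertices then yields a set $S\ni v_0$ such that the $T_i$'s (with blockers removed) contribute more than $(k-2)|S|$ components of $G-S$, contradicting $1/(k-2)$-toughness (which says $\Omega(G-S)\le (k-2)|S|$ whenever $\Omega(G-S)\ge 2$).

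The main obstacle is Ingredient B; A is essentially free, and the ``in particular'' assertion is just the case $k=3$ of A + B. Within B, the delicate part is turning the failure of a degree-reducing edge-swap into a controlled vertex cut $S$ producing many components. A clean packaging is to prove the slightly stronger statement that $\Omega(G-S)\le (k-2)|S|+2$ for every $S\subseteq V(G)$ already guarantees a spanning tree of maximum degree $\le k$; $1/(k-2)$-toughness implies this bound with slack $2$, exactly the wiggle room needed to drive the swap argument to termination.
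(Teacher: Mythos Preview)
The paper does not supply its own proof of this theorem: it is quoted from Jackson and Wormald \cite{jackson1990k} with a terminal \qed\ and no argument, serving only as background. So there is nothing in the paper to compare against.

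That said, your proposal is correct and is in fact exactly the argument Jackson and Wormald give: Ingredient~A (double a bounded-degree spanning tree to get an Eulerian $k$-walk) is immediate, and Ingredient~B is Win's 1989 theorem. Your closing remark is the right packaging --- Win actually proves the stronger statement that $\Omega(G-S)\le (k-2)|S|+2$ for all $S$ suffices, and $1/(k-2)$-toughness gives this with room to spare. One small wording issue: ``taking $S=\emptyset$ in the definition of toughness'' is slightly off, since the toughness inequality is only asserted when $\Omega(G-S)>1$; what you mean is that a disconnected $G$ would violate the toughness condition at $S=\emptyset$, hence $G$ is connected. The edge-exchange sketch for Win's theorem is plausible but underspecified (the ``careful bookkeeping on blocker vertices'' is where all the work lies), so if you intend to include a self-contained proof you will need to flesh that out or simply cite Win.
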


In \cite{ellingham2000toughness}, it is proved that
\begin{theorem}\label{thm3}
Every 4-tough graph has a 2-walk. \qed
\end{theorem}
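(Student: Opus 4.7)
\medskip\noindent\textbf{Plan for Theorem \ref{thm3}.}

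The useful reformulation is that $G$ admits a $2$-walk if and only if there is a connected spanning Eulerian submultigraph $H$ of $G$ (with edges drawn from $E(G)$, allowing multiplicities) satisfying $\Delta(H) \le 4$: an Eulerian circuit of such an $H$ visits each vertex $v$ exactly $\deg_H(v)/2 \le 2$ times, which traces out the desired closed walk, and conversely the multiset of edges traversed by any $2$-walk forms such an $H$. So the task reduces to constructing an appropriate $H$ from the hypothesis that $G$ is $4$-tough.

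My first step would be to extract a spanning tree $T \subseteq G$ with $\Delta(T) \le 3$. This follows from the classical spanning-tree theorem of Win: every $t$-tough graph with $t \ge 1$ admits a spanning tree of maximum degree at most $3$, and $4$-toughness certainly implies $1$-toughness. Let $B := \{v \in V(G) : \deg_T(v) = 3\}$ denote the ``bad'' vertices of $T$, whose naive doubling would produce degree $6$.

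Second, I would promote $T$ to $H$ by starting from the doubled tree $2T$ --- connected, with all degrees even, and of maximum degree at most $6$ --- and repairing each $v \in B$ so as to bring its degree down from $6$ to $4$ while maintaining both connectivity and the even-degree property everywhere. A single repair replaces one doubled tree edge incident to $v$ by a non-tree edge of $G$ chosen to reconnect the two tree components created by the removal; if the choices are made compatibly, this drops $v$'s degree by $2$ without disturbing parity at any other vertex.

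The \emph{main obstacle} is executing these repairs simultaneously across all of $B$: adding a non-tree edge shifts the degree of its other endpoint too, which may itself lie in $B$ or become a new parity violator, and the repairs at different $v$'s can conflict along common tree paths. The role of the $4$-toughness hypothesis is precisely to guarantee that the non-tree edges of $G$ are plentiful and flexibly distributed around $B$, so that a globally compatible family of repairs exists; formally, the question reduces to finding a suitable matching or $T$-join in an auxiliary multigraph on $V(G)$ whose edges come from $E(G) \setminus E(T)$. Pinning down the right auxiliary structure and verifying, by a toughness-based counting argument, that the constant $4$ (rather than a smaller threshold) is what the matching existence needs, is the technical heart of the proof.
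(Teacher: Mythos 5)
You should first note that the paper contains no proof of Theorem \ref{thm3}: it is quoted from Ellingham and Zha \cite{ellingham2000toughness} with a \qed and no argument, so your proposal must stand on its own terms --- and on its own terms it is a plan, not a proof. The opening reformulation (a $2$-walk exists if and only if $2*G$ has a connected spanning Eulerian sub-multigraph $H$ with $\Delta(H)\le 4$) is correct, and is in fact exactly the framework this paper uses for its own main result, Theorem \ref{thm7}; the appeal to Win's theorem to extract a spanning tree $T$ with $\Delta(T)\le 3$ from $1$-toughness is also sound. But everything after that is deferred: you name the difficulty (carrying out compatible repairs at all vertices of $B$ simultaneously), propose that it ``reduces to finding a suitable matching or $T$-join in an auxiliary multigraph,'' and then declare that identifying this structure and verifying its existence by a toughness-based count is ``the technical heart of the proof.'' That heart is the entire content of the theorem; a proof consisting of the easy reductions plus a pointer to where the hard part lives has a genuine, and here decisive, gap.

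Moreover, the one repair you do describe has a concrete parity flaw. In $2T$ every degree is even. If at a bad vertex $v$ you delete one copy of a doubled tree edge $uv$ and insert a single non-tree edge $xy$, then $u$, $v$, $x$ and $y$ all change parity, so you manufacture new odd-degree vertices instead of merely lowering $\deg(v)$; if instead you delete both copies of $uv$ and insert $xy$ doubled (to preserve parity and reconnect the two components of $T-uv$), you raise the degrees of $x$ and $y$ by $2$ and may push them past $4$. Nothing in the sketch controls where the endpoints of the reconnecting edges land, and a maximum-degree-constrained $T$-join of non-tree edges is precisely the object whose existence under $4$-toughness would have to be proved --- no counting argument is offered, and no off-the-shelf toughness criterion supplies one. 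It is worth knowing that the published proof in \cite{ellingham2000toughness} does not follow your route of patching the doubled Win tree: it works with an extremal choice of spanning structure and a direct, rather intricate toughness analysis of the branch vertices. So your plan is not a reconstruction of the known argument but a distinct strategy whose feasibility is exactly the open step you have left unproved.
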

The following well-known conjecture related to $k$-walks appeared in \cite{jackson1990k}.
\begin{conjecture}\label{conj2}
Every $1/(k-1)$-tough graph has a $k$-walk.
\end{conjecture}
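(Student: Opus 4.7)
The plan is to sharpen the approach behind Theorem \ref{thm1} by exploiting the fact that a $k$-walk is a strictly weaker object than a spanning tree of maximum degree $k$. Jackson and Wormald's bound $\tau(G)\ge 1/(k-2)$ is essentially optimal for producing a spanning $k$-tree (via Win's theorem), and the $1/(k-1)$ improvement conjectured here should therefore come from giving up the tree surrogate and working with a $k$-walk directly.

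First I would reformulate the goal in factor-theoretic terms. A $k$-walk in $G$ exists iff $G$ has a spanning connected submultigraph $H\subseteq (2k)\ast G$ in which every vertex has positive even degree at most $2k$; such $H$ is Eulerian and its Euler tour is the desired walk. Using a parity-constrained version of Tutte's $f$-factor theorem (in the vein of Lov\'asz's $(g,f)$-factor theorem), one should be able to characterize existence of such $H$ by a separator/component inequality strictly weaker than Win's condition for a spanning $k$-tree, and then verify this inequality from the assumption $\tau(G)\ge 1/(k-1)$, with the connectedness requirement handled by a standard patching argument that merges Eulerian pieces through shared vertices.

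Second, I would set up an induction on $k$ with the following structural step: given a $1/(k-1)$-tough graph $G$ and a cutset $S$ achieving (or nearly achieving) the toughness ratio, peel off the components of $G-S$ one at a time; each component $C$ together with its attachment to $S$ forms a smaller subgraph whose toughness is no worse, so one applies the inductive hypothesis (or Theorem \ref{thm1}) to obtain a walk in $C\cup N(C)$ that uses each vertex of $N(C)$ at most a controlled number of times, and then stitches the component walks together at the $S$-vertices, ensuring each $s\in S$ is used at most $k$ times in total. The bookkeeping here is where the constant $1/(k-1)$, as opposed to $1/(k-2)$, must be consumed exactly.

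The main obstacle is the base case $k=2$, which asserts that every $1$-tough graph admits a $2$-walk. This is a longstanding near-Hamiltonian statement of the same flavour as Chv\'atal's Conjecture \ref{conj1}, and no tree surrogate is available since $1$-toughness is too weak to force a Hamilton path. The parity/factor reformulation becomes most delicate here: one must produce a connected spanning submultigraph of $4\ast G$ with all vertex degrees in $\{2,4\}$, and the interaction of the parity constraint with the connectedness requirement is where I expect any attempted proof of Conjecture \ref{conj2} either to succeed by a genuinely new structural insight or to stall against the same barrier that keeps Conjecture \ref{conj1} open.
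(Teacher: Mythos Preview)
The statement you are attempting to prove is Conjecture~\ref{conj2}, which the paper presents as an open problem from \cite{jackson1990k}; the paper does not prove it, and indeed the special case $k=2$ (every $1$-tough graph has a $2$-walk) remains open. So there is no ``paper's own proof'' to compare against.

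Your proposal is not a proof but a strategy sketch, and you correctly identify its fatal gap yourself: the base case $k=2$ is precisely the notorious open problem, and nothing in your outline supplies the ``genuinely new structural insight'' you say would be required. Beyond that, the inductive step is also not sound as written. The claim that each component $C$ of $G-S$ together with its attachment has toughness ``no worse'' than $1/(k-1)$ is false in general---toughness does not behave monotonically under passing to such subgraphs---and even if it did, the induction would still bottom out at $k=2$. The factor-theoretic reformulation is accurate and is in fact the viewpoint the paper uses for its own (much more restricted) Theorem~\ref{thm7}, but the existence of a connected spanning Eulerian submultigraph with the right degree bounds is exactly the hard part, and invoking a ``parity-constrained Tutte/Lov\'asz theorem plus patching'' does not settle it: the connectedness constraint is not captured by $(g,f)$-factor conditions, and the patching argument you allude to is where the entire difficulty lies.

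In short, the proposal is an honest survey of why the conjecture is hard, but it is not a proof and should not be presented as one.
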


Results just mentioned do not apply to the case $k=1$.
For some classes of graphs, there are strong results connected toughness and Hamiltonicity (recall that 
a 1-walk is a Hamiltonian cycle). E.g. in \cite {chen1998tough}, G.~Chen, M.S.~Jackson, A.E.~Kezdy, and J.~Lehel proved
\begin{theorem}\label{thm4}
Every 18-tough {\em chordal} graph is Hamiltonian. \qed
\end{theorem}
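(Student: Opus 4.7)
The plan is to argue by contradiction: assume $G$ is an $18$-tough chordal graph that is not Hamiltonian, and let $C$ be a longest cycle of $G$. Since $C$ is not Hamiltonian, some component $H$ of $G-V(C)$ is nonempty, and I would study the attachment set $A=N_G(V(H))\cap V(C)$ to extract either a contradiction to the maximality of $C$ or a direct violation of $18$-toughness.

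First, a standard rotation-extension analysis of the longest cycle $C$ shows that consecutive attachments along $C$ satisfy strong spacing conditions: between two cyclically consecutive attachments $u,v\in A$, the internal $C$-segment from $u$ to $v$ cannot support certain edges to $H$ or to other attachments without allowing me to splice a longer cycle. This forces the ``gap'' segments between attachments to behave quite rigidly.

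Next, I would exploit chordality. Since every induced cycle in $G$ has length $3$, any cycle formed by taking a $C$-segment between two attachments and closing it through $H$ must have a chord; iterating, one forces many chord edges between the endpoints of each segment and between segment vertices and $H$. The payoff is that an appropriate set $S$ built from the attachments (together with a bounded number of auxiliary vertices per segment) separates $G$ into the various $C$-segments between consecutive elements of $S$, plus $H$ and any other components of $G-V(C)$.

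The main obstacle is the quantitative toughness estimate: one must show $|S|<18\cdot\Omega(G-S)$, i.e.\ spend fewer than $18$ vertices of $S$ per component produced. Achieving the constant $18$ requires a careful, possibly iterative, choice of $S$ guided by the clique-tree representation of the chordal graph, so that no vertex of $S$ is ``wasted'' separating fewer components than the budget allows; one likely needs to prune attachments that cluster inside a single maximal clique of the tree, and to replace runs of trivially connected segments by fatter ones. Balancing the structural savings from chordality and cycle-maximality against the numerical constant $18$ is the heart of the proof.
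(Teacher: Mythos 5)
First, note that the paper itself contains no proof of this statement: Theorem~\ref{thm4} is quoted verbatim from \cite{chen1998tough} (hence the immediate \qed), so the comparison is against the Chen--Jackson--K\'ezdy--Lehel argument, not against anything in this manuscript. Measured against that, your proposal has a genuine gap, and indeed it concedes as much: the entire quantitative content of the theorem---exhibiting a set $S$ with $\Omega(G-S)>1$ and $|S|<18\cdot\Omega(G-S)$, or else splicing a longer cycle---is deferred to ``a careful, possibly iterative, choice of $S$'' that is never constructed. Nothing in the sketch explains why the attachment set $A$ plus ``a bounded number of auxiliary vertices per segment'' should separate $G$ into a number of components proportional to $|S|$ at ratio better than $1/18$; a priori such a separator built from a longest cycle yields $\Omega(G-S)$ comparable to $|S|$ only up to an unspecified constant, and pinning that constant down is exactly the theorem. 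Likewise, the claim that chordality ``forces many chord edges'' and rigidifies the gap segments is asserted, not argued: the rotation--extension machinery gives crossing/adjacency restrictions on $C$, but you give no lemma converting induced-cycle-freeness into the segment structure you need, and without one the plan cannot be assessed, let alone certified, for the constant $18$.

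It is also worth saying that the cited proof does not follow your route at all. Chen, Jackson, K\'ezdy and Lehel do not argue by contradiction from a longest cycle; they work constructively with a clique-tree representation of the chordal graph, traversing the tree of maximal cliques and using the toughness hypothesis to reserve enough distinct vertices at each clique to close up a Hamiltonian cycle (in spirit closer to the walk constructions of \cite{jackson1990k}, and to the construction of $H$ in the present paper, than to rotation--extension). You mention clique trees only as a guide for ``pruning'' the separator, which inverts their role. So beyond the missing counting argument, the proposal's core mechanism (longest cycle plus toughness-violating cut) is not the one known to work here, and there is no evidence in the sketch that it can be made to work with any fixed constant, let alone $18$.
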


\begin{definition}
$G$ is said to be {\em split} if $V(G)$ can be partitioned into an independent set $I$ and a clique $C$.
\end{definition}
For split graphs, we have many beautiful results.
E.g. in \cite{kratsch1996toughness} the following is proved.
\begin{theorem}\label{thm5}
Every $3/2$-tough {\em split} graph on at least three vertices is Hamiltonian, and this is best possible in the sense that there is a sequence $\{G_n\}^{\infty}_{n=1}$ of split graphs with no 2-factor and $\tau(G_n)\to 3/2$. \qed
\end{theorem}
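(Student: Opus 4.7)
The plan splits into two parts: Hamiltonicity (sufficiency of $3/2$-toughness) and sharpness (construction of non-$2$-factor examples with toughness tending to $3/2$).

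For Part~1, let $G$ be a $3/2$-tough split graph on at least three vertices with partition $V(G)=I\cup C$, $I$ independent and $C$ a clique. The cases $|I|\le 1$ are trivial ($G$ is a clique or a clique with one adjacent vertex). For $|I|\ge 2$, applying $3/2$-toughness to the cuts $S=C$, $S=N_G(X)\cap C$ for $X\subseteq I$ with $|X|\ge 2$ (which isolates the $X$-vertices), and $S=N_G(v)$ for a single $v\in I$ yields $|C|\ge \lceil 3|I|/2\rceil$, $|N_G(X)|\ge \lceil 3|X|/2\rceil$, and $\deg_G(v)\ge 2$ for every $v\in I$. Since $I$ is independent and $C$ is a clique, a Hamiltonian cycle in $G$ consists of an alternation of $I$-vertices and paths through $C$, each $v\in I$ flanked by two distinct $C$-neighbors. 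So it suffices to (a) produce a bipartite $(2,\le 2)$-factor $H$ of $G[I,C]$, i.e.\ $\deg_H(v)=2$ for $v\in I$ and $\deg_H(c)\le 2$ for $c\in C$, and then (b) link $H$ into a single Hamiltonian cycle using intra-$C$ edges.

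For (a), by a standard degree-constrained subgraph (max-flow / Ore-type) theorem, $H$ exists iff $\sum_{c\in C}\min(|N_X(c)|,2)\ge 2|X|$ for every $X\subseteq I$, where $N_X(c):=N_G(c)\cap X$. Writing $a=|\{c:|N_X(c)|=1\}|$ and $b=|\{c:|N_X(c)|\ge 2\}|$, the target reads $a+2b\ge 2|X|$, while toughness already delivers $a+b=|N_G(X)|\ge \lceil 3|X|/2\rceil$ and $\sum_{v\in X}\deg_G(v)\ge 2|X|$. The subtle regime is when $b<|X|/2$ and some "heavy" $C$-vertices carry $|N_X(c)|\ge 3$; in that case I would construct a secondary cut, removing the heavy vertices and keeping exactly one exclusive singleton per $X$-vertex with $a_v\ge 1$, which isolates the remaining $X$-vertices and forces a toughness violation unless the inequality holds. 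For (b), since $C$ is a clique, raise each $C$-vertex to $H$-degree $2$ by adding edges within $C$, producing a 2-factor; every cycle of this 2-factor contains a $C$-vertex (because $I$ is independent), so standard 2-swap rerouting inside $C$ merges all cycles into one Hamiltonian cycle.

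For Part~2, define $G_n$ for $n\ge 1$ as the split graph with independent set $I=\{v_1,\ldots,v_{2n+1}\}$ and clique $C=\{c_1,\ldots,c_n\}\cup\{d_1,\ldots,d_{2n+1}\}$, where each $v_i$ is adjacent exactly to $c_1,\ldots,c_n$ and to $d_i$. A would-be 2-factor demands $2(2n+1)=4n+2$ incidences from $I$ into $C$, but the $C$-side capacity is at most $2n$ from the shared $c_j$'s (two each) plus $2n+1$ from the dedicated $d_i$'s (one each, since only $v_i$ borders $d_i$), totalling $4n+1<4n+2$; so no 2-factor exists. The cut $S=\{c_1,\ldots,c_n,d_1,\ldots,d_{2n}\}$ isolates $v_1,\ldots,v_{2n}$ and leaves the lone component $\{v_{2n+1},d_{2n+1}\}$, giving $\Omega(G_n-S)=2n+1$, $|S|=3n$, so $\tau(G_n)\le 3n/(2n+1)$. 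A routine case check over cuts (parameterized by how many shared vertices, dedicated vertices, and $I$-vertices are removed) confirms this is tight, whence $\tau(G_n)=3n/(2n+1)\to 3/2$ as $n\to\infty$.

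The hardest step is deriving the factor inequality $\sum_c\min(|N_X(c)|,2)\ge 2|X|$ from $3/2$-toughness, because even the strengthened bound $|N_G(X)|\ge \lceil 3|X|/2\rceil$ falls short in the presence of heavy multi-neighbor $C$-vertices; the trick is to design a second cut that removes these heavy vertices while sparing one exclusive singleton per $X$-vertex, and the sharpness example $G_n$ essentially encodes the unique borderline obstruction that the argument must exclude. Once $H$ is obtained, the extension to a Hamiltonian cycle via intra-$C$ swaps is comparatively routine given the richness of edges in the clique.
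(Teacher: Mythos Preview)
The paper does not prove Theorem~\ref{thm5} at all: it is quoted as a known result from \cite{kratsch1996toughness} and closed with a \qed{} symbol immediately after the statement. There is therefore no ``paper's own proof'' to compare your proposal against; any argument you supply is necessarily an independent reconstruction of the Kratsch--Lehel result rather than something the present paper undertakes.

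On the substance of your sketch: Part~2 (the sharpness family $G_n$) is essentially the standard construction and your counting argument for the non-existence of a $2$-factor and the upper bound $\tau(G_n)\le 3n/(2n+1)$ are correct, though the matching lower bound on toughness is asserted rather than proved. Part~1 is where the real work lies, and your outline is honest about this: the deduction of the defect condition $\sum_{c}\min(|N_X(c)|,2)\ge 2|X|$ from $3/2$-toughness is \emph{not} an immediate consequence of $|N_G(X)|\ge\lceil 3|X|/2\rceil$, and your ``secondary cut'' idea is only gestured at, not carried out. Likewise, in step~(b) you need more than ``every cycle contains a $C$-vertex'': to perform a $2$-swap merge you need two cycles sharing a $C$--$C$ edge opportunity, and when a cycle uses a $C$-vertex of $H$-degree~$2$ surrounded by $I$-vertices this requires a further rerouting argument. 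These are exactly the technical points that make the original Kratsch--Lehel proof nontrivial; your plan identifies them correctly but does not yet close them.
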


Let us consider a superclass of split graphs, named $2K_2$-free graphs. These are graphs which do not contain an induced copy of $2K_2$, the graph on four vertices consisting of two vertex disjoint edges. Obviously, every split graph is a $2K_2$-free graph. What is more, every co-chordal graph, i.e. the complement of a chordal graph, is also a $2K_2$-free graph. That means that the class of $2K_2$-free graphs is as rich as the class of chordal graphs.

Recently, in \cite{broersma2014toughness}, H.~Broersma, V.~Patel and A.~Pyatkin proved the following. 
\begin{theorem}\label{thm6}
Every 25-tough 2$K_2$-free graph on at least three vertices is Hamiltonian. \qed
\end{theorem}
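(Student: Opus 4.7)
The plan is to argue by contradiction: suppose $G$ is $25$-tough, $2K_2$-free, on at least three vertices, and not Hamiltonian. We may assume $G$ is not complete. For any vertex $v$ of minimum degree, $S=N(v)$ separates $\{v\}$ from at least one other component of $G-S$, so $25$-toughness yields $\delta(G)\ge 50$ and $\kappa(G)\ge 50$. Let $C$ be a longest cycle of $G$ (which exists since $G$ is $2$-connected) and set $R:=V(G)\setminus V(C)$; by assumption $R\neq\emptyset$.

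First I would extract the coarse structure of $G[R]$ from the $2K_2$-free hypothesis. Claim: at most one component of $G[R]$ contains an edge. Indeed, two edges lying in distinct components of $G-V(C)$ are vertex disjoint and have no edge between them, so together they induce a $2K_2$. Hence $R=R_0\cup R_1$, where $R_0$ consists of vertices isolated in $G[R]$ and $R_1$ spans the unique (possibly empty) nontrivial component. A parallel $2K_2$ argument then restricts edges between $R$ and $V(C)$: any two such edges that are vertex disjoint must be joined by a chord, which forces the attachments of $R$ on $C$ to cluster.

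Second, I would run the classical rotation argument, reinforced by $2K_2$-freeness. Fix $v\in R$, orient $C$, and let $A=N_C(v)$ and $A^+=\{u^+:u\in A\}$. A P\'osa-style exchange shows $A^+\cap N[v]=\emptyset$ and $A^+$ is independent; maximality of $|V(C)|$ also forbids edges of $G$ from $v$ into $A^+$. Applying $2K_2$-freeness to an edge $vu$ with $u\in A$ and any edge whose ends both avoid $N[v]$ pins the latter edge into a neighborhood of $A$; in particular, the arcs of $C$ between consecutive elements of $A$ cannot be joined to one another, nor to $R$, by chords that avoid $A$.

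Third, I would produce a separating set $S$ violating $25$-toughness. Take $S:=A$, augmented if necessary by a small attachment set of $R_1$ onto $V(C)$ (bounded by step one). The components of $G-S$ then include (i) the arcs of $C$ between consecutive elements of $A$, which by the chord restriction of step two stay disjoint, and (ii) $\{v\}$ together with the other singletons of $R_0$, whose neighborhoods on $C$ lie in $A$ and so are absent in $G-S$. A counting combining $\delta(G)\ge 50$ (which forces $|A|$ to be large whenever the arcs are numerous) with the arc lower bound on $\Omega(G-S)$ gives $|S| < 25\,\Omega(G-S)$, contradicting $25$-toughness.

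The main obstacle I expect is the last step: turning the local constraints from step two into a clean global count of components of $G-S$. One must handle carefully the cases where $A$ has consecutive vertices on $C$ (so an arc degenerates), where $R_1$ is nontrivial (its attachments to $V(C)$ can fuse putative components and must be absorbed into $S$), and where sporadic edges between $R_0$ and $V(C)\setminus A$ exist — which $2K_2$-freeness restricts but does not forbid. The generous constant $25$ suggests there is room for a wasteful estimate; the point is to deploy $2K_2$-freeness twice — once to constrain the shape of $R$, and once to rule out reconnecting chords — so that the slack covers the loss factor in the toughness inequality.
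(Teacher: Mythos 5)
Your outline has a genuine gap exactly at its hinge, Steps two and three, and the gap is structural rather than a matter of slack in the constant. Applying $2K_2$-freeness to the edge $vu$ with $u\in A$ only forces every edge $xy$ disjoint from $\{v,u\}$ to have an endpoint in $N(v)\cup N(u)$; since $N(u)$ is scattered along $C$ and is certainly not contained in $A$, a chord joining two different arcs of $C-A$ can perfectly well avoid $A$ --- it merely needs one endpoint adjacent to a single vertex of $A$. Such chords survive the deletion of $S=A$ and fuse your putative components, so the bound $\Omega(G-S)\ge(\text{number of arcs})$ fails; neither maximality of $C$ nor P\'osa rotation forbids arc-to-arc chords (they only control $A^+$). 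The count in Step three, which is in any case asserted rather than performed, therefore collapses.

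There is also a decisive sanity check showing the outline proves too much. With $S=A$ (augmented by a bounded attachment set of $R_1$), the number of arcs equals $|A|=|S|$ up to a small correction, so your claimed component count would give $\tau(G)$ at most roughly $1$ for every non-Hamiltonian $2K_2$-free graph --- in effect ``$1$-tough implies Hamiltonian'' for this class. That contradicts Theorem \ref{thm5} of the present paper: split graphs are $2K_2$-free, and there exist split graphs of toughness arbitrarily close to $3/2$ with no $2$-factor, hence non-Hamiltonian. So no wasteful estimate can rescue the scheme. For comparison, the actual proof of Broersma--Patel--Pyatkin (the paper only cites this theorem, it does not reprove it) deploys $2K_2$-freeness on pairs of \emph{cycle} edges $uu^+$ and $ww^+$ with $u,w\in A$: since $u^+w^+\notin E(G)$ by rotation, one of $uw$, $uw^+$, $u^+w$ must be present, and it is this forced adjacency pattern among neighbors and successors, not a clean arc decomposition, that is converted into a tough set --- which is precisely where the large constant $25$ enters.
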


In this paper, we prove
\begin{theorem}\label{thm7}
Every 2-tough 2$K_2$-free graph admits a 2-walk.
\end{theorem}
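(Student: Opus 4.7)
The plan is to take a longest cycle $C$ in $G$, understand the structure of $R := V(G) \setminus V(C)$ forced by $2K_2$-freeness, and then use $2$-toughness to insert the vertices of $R$ into $C$ via controlled detours, producing a $2$-walk. If $R = \emptyset$, then $C$ is Hamiltonian and $G$ already admits a $1$-walk, hence a $2$-walk, so I may assume $R \neq \emptyset$. My first structural observation is that at most one component of $G[R]$ can contain an edge: otherwise two vertex-disjoint edges, one from each component, span no further edges between them in $G$ (by the definition of component), inducing a copy of $2K_2$. So write $R = V(H_0) \sqcup \{v_1,\ldots,v_s\}$ with $H_0$ a (possibly empty) non-trivial component of $G[R]$ and each $v_i$ isolated in $G[R]$.

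Next I would match the isolated vertices $v_1,\ldots,v_s$ to distinct attachment points on $C$. For any nonempty $S \subseteq \{v_1,\ldots,v_s\}$ with $|S| \geq 2$, every vertex of $S$ is isolated in $G - T$ where $T := N_G(S) \cap V(C)$, so $\Omega(G - T) \ge |S| \geq 2$, and $2$-toughness yields $|T| \ge 2|S| > |S|$; for $|S|=1$, a neighbor on $C$ exists by connectivity of $G$. Hall's theorem then produces an injection $v_i \mapsto c_i \in V(C)$ with $v_i c_i \in E(G)$. If $H_0$ is empty, starting from $C$ and inserting each detour $c_i \to v_i \to c_i$ yields a closed spanning walk visiting each $v_i$ once and each $c_i$ at most twice, which is a $2$-walk.

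In the remaining case $H_0 \neq \emptyset$, I would build a closed walk on $V(H_0) \cup V(C)$ that covers $V(H_0)$ with at most two visits per vertex, enters and exits $H_0$ through two distinct attachment vertices $c^*, c^{**} \in V(C)$, and can be spliced into $C$ without disturbing the matching-based detours above. Here the crucial tool is $2K_2$-freeness \emph{between} $H_0$ and $C$: any edge of $H_0$ together with any edge of $C$ must share a $G$-edge, so the two endpoints of a single edge of $H_0$ jointly dominate all but an independent subset of $V(C)$. I expect to extract a suitable path of $H_0$ with endpoints adjacent to some $c^*, c^{**}$ on $C$, and reduce the rest of $H_0$ to detours handled exactly like the $v_i$'s; an enlarged Hall/toughness argument then guarantees the required distinct attachment points on $C$ coexist.

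The main obstacle will be this last step. $H_0$ need not be $2$-tough itself and need not admit a Hamilton path --- the star $K_{1,t}$ is connected and $2K_2$-free yet has no $2$-walk at all --- so the internal structure of $H_0$ must be dissected using its $2K_2$-free interaction with $C$ rather than any intrinsic property. The technical heart of the proof will be organizing this dissection so that every vertex of $H_0$ is covered by at most two passes of the walk while only a bounded number of $V(C)$-vertices are consumed, leaving room for the isolated-vertex detours.
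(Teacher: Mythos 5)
Your plan is sound up to and including the isolated vertices of $G[R]$: the observation that at most one component of $G[R]$ contains an edge is exactly Lemma~\ref{lm1}, and your Hall-plus-toughness matching of the $v_i$'s to $C$ is the same mechanism the paper uses for its first-class edges (the paper matches each such vertex to \emph{two} clique neighbors, you to one plus a detour; both are fine). But the third paragraph, which you yourself flag as the ``main obstacle,'' is where the entire difficulty of the theorem lives, and what you offer there does not work as stated. First, the ``enlarged Hall/toughness argument'' is unavailable for vertices of $H_0$: your toughness bound $|T|\ge 2|S|$ relied on the vertices of $S$ becoming isolated after deleting $T=N_G(S)\cap V(C)$, which holds precisely because they were isolated in $G[R]$. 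For $S\subseteq V(H_0)$ the deletion of $T$ leaves $S$ attached to the rest of $H_0$, so $\Omega(G-T)$ need not grow and $2$-toughness gives no lower bound on $|T|$; indeed a vertex of $H_0$ may have \emph{no} neighbor on $C$ at all ($2K_2$-freeness only forces the undominated part of $C$ to be independent, as you note, which is far from an attachment guarantee). Second, ``extract a suitable path of $H_0$ and reduce the rest to detours'' presupposes structure $H_0$ does not have: as your own $K_{1,t}$ example shows, $H_0$ may admit no long path and no 2-walk, so covering it forces a recursion \emph{inside} $H_0$ --- and once you recurse, the vertices peeled off at deeper levels attach to interior parts of $H_0$ rather than to $C$, so the visit budgets of shared attachment vertices compound and your single-cycle splicing framework has no way to control them.

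The paper resolves exactly this obstacle by abandoning the longest-cycle framework altogether. It iterates your Lemma~\ref{lm1} observation: peel off a maximum weakly-dominating clique $Q_1$ (Theorem~\ref{weak}), leaving an independent set $D_1$ and at most one nontrivial component $G_1$, then recurse in $G_1$, producing a ``clique tower'' $Q_1,\ldots,Q_k$ and a global independent set $D=\cup_i D_i$ (Theorems~\ref{thm9},~\ref{thm10}). All of $D$ --- across all recursion levels --- is handled by a single Hall/$2$-toughness application (your isolated-vertex argument, applied once, since $D$ is independent), with neighbors allowed anywhere in the tower rather than on a fixed cycle; this is what defuses the attachment problem you run into. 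Connectivity and parity are then organized globally via an auxiliary Eulerian multigraph $\Gamma$ on the cliques, realized inside $2*G$ by inter-clique edges guaranteed by weak domination and by matchings/cycles within each clique, yielding an Eulerian spanning subgraph $H$ with all degrees $2$ or $4$; its Euler tour is the $2$-walk. The iterated decomposition plus the Eulerian lift is the missing idea: without some substitute for it, your proposal stalls precisely where you predict it will.
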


\section{On 2$K_2$-free graphs}

We present several structural properties of $2K_2$-free graphs which turn out to be very useful in the proof of the main theorem.
For a subset $A\subset V(G)$, let $Dom(X)$ denote the set of vertices dominated by $A$, i.e. $Dom(A)=A\cup\{y\in V(G);~there~exists~x\in A~such~that~xy\in E(G)\}$. The set $A\subset V(G)$ is said to be {\em dominating} if $Dom(A)=V(G)$. A {\em dominating clique} of a graph $G$ is a dominating set which induces a complete subgraph in $G$. The following theorem comes from \cite{chung1990maximum}.

\begin{theorem}\label{thm8}
If $G$ is $2K_2$-free and the maximum size of cliques $\omega(G)\ge3$, then $G$ has a dominating clique of size $\omega(G)$. \qed
\end{theorem}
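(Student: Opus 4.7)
The plan is to pick, among all maximum cliques of $G$, one whose dominated set $Dom(K)$ is as large as possible, and derive a contradiction assuming $Dom(K)\ne V(G)$. Since $G$ is $2K_2$-free, at most one connected component of $G$ can contain an edge (two disjoint edges in different components would form an induced $2K_2$), so the hypothesis $\omega(G)\ge 3$ lets me focus on the unique non-trivial component; I may assume $G$ is connected. Fix a maximum clique $K$ with $|Dom(K)|$ maximum, suppose for contradiction there is some $v\notin Dom(K)$, and pick a neighbor $w$ of $v$, which exists by connectedness and necessarily lies outside $K$.

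The key local consequence of $2K_2$-freeness is that for every $x,y\in K$ the four vertices $\{v,w,x,y\}$ cannot induce $2K_2$: since $vw$ and $xy$ are edges and $vx,vy$ are not, at least one of $wx,wy$ must be an edge. Varying $x,y$ over $K$ forces $w$ to be adjacent to at least $|K|-1$ vertices of $K$. If $w$ is adjacent to all of $K$, then $K\cup\{w\}$ is a larger clique, contradicting $|K|=\omega(G)$. Otherwise there is a unique $z\in K$ with $wz\notin E(G)$, and $K':=(K\setminus\{z\})\cup\{w\}$ is another maximum clique; and $v\in Dom(K')$ because $w\in K'$ is a neighbor of $v$.

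To obtain the contradiction I would show $Dom(K)\subseteq Dom(K')$, which combined with $v\in Dom(K')\setminus Dom(K)$ violates the choice of $K$. All vertices of $K$ lie in $Dom(K')$ trivially (those in $K\setminus\{z\}$ directly, and $z$ through its adjacency to any vertex of $K\setminus\{z\}\subseteq K'$), so the only subtle case is a vertex $u\notin K$ whose only neighbor in $K$ is $z$, for which I need $uw\in E(G)$. Assuming the contrary I would examine $\{u,z,v,w\}$: the edges $uz$ and $vw$ are present, $zv$ and $zw$ are non-edges by construction, and $uv$ cannot be an edge either (otherwise $u$ would be a neighbor of $v$, and the previous paragraph would then force $u$ to have at least $|K|-1\ge 2$ neighbors in $K$, not just $z$), so $\{u,z,v,w\}$ induces a $2K_2$, a contradiction.

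I expect this last verification to be the main delicate point: ruling out the configuration in which $u$'s only neighbor in $K$ is $z$ while $uw\notin E(G)$. It is where the $2K_2$-free condition is applied a second time, and it is precisely where the assumption $\omega(G)\ge 3$ (rather than merely $\omega(G)\ge 2$) is genuinely used, since the $2$-neighbor lower bound $|K|-1\ge 2$ is what breaks the case $uv\in E(G)$.
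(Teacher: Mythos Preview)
The paper does not supply its own proof of this statement: Theorem~\ref{thm8} is quoted from \cite{chung1990maximum} and closed with a \qed, so there is nothing internal to compare your argument against.

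On its own merits your argument is correct and is essentially the classical one. The extremal choice of a maximum clique $K$ maximizing $|Dom(K)|$, followed by the swap $K'=(K\setminus\{z\})\cup\{w\}$, is the standard mechanism for this result; your handling of the residual case (a vertex $u$ whose only neighbor in $K$ is $z$) via a second application of $2K_2$-freeness to $\{u,z,v,w\}$ is clean, and the observation that $uv\in E(G)$ would force $u$ to have at least $|K|-1\ge 2$ neighbors in $K$ is exactly where $\omega(G)\ge 3$ is used. One cosmetic remark: the reduction ``I may assume $G$ is connected'' silently discards isolated vertices, but an isolated vertex can never lie in $Dom(K)$ for a clique $K$ of size at least $2$, so the theorem as printed tacitly needs connectedness; this is implicit in the cited source and irrelevant for the paper's application to $2$-tough graphs.
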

Let us consider a generalization of dominating set. The set $A\subset V(G)$ is said to be {\em weakly-dominating} if for any edge $v_1v_2\in G$, we have $v_1\in Dom(A)$ or $v_2\in Dom(A)$. And, a {\em weakly-dominating clique} of a graph $G$ is a weakly-dominating set which induces a complete subgraph in $G$. We say a clique $Q_j$ of $G$ is weakly-dominated by a clique $Q_i$ of $G$, if for any pair $(v_1,v_2)$ of vertices in $Q_j$ one has that at least one of $v_1$ and $v_2$ is adjacent to a vertex in $Q_i$.

Obviously, a dominating set is a weakly-dominating set.
Then, similarly to Theorem \ref{thm8}, we get:
\begin{theorem}\label{weak}
If $G$ is $2K_2$-free and the maximum size of clique $\omega(G)\ge2$, then $G$ has a weakly-dominating clique of size $\omega(G)$.
\end{theorem}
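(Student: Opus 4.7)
The plan is to split into two cases based on the value of $\omega(G)$, and observe that the conclusion is essentially immediate in each.

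First I would dispose of the case $\omega(G)\ge 3$ by a direct appeal to Theorem~\ref{thm8}: it hands us a dominating clique $Q$ of size $\omega(G)$. Since every vertex of $G$ already lies in $Dom(Q)$, the weak-domination condition holds vacuously for every edge, so $Q$ is the desired weakly-dominating clique of size $\omega(G)$.

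The only remaining case is $\omega(G)=2$, i.e.\ $G$ is triangle-free but still has at least one edge. Here I would simply take an arbitrary edge $uv\in E(G)$ and set $A=\{u,v\}$, which is a clique of size $\omega(G)=2$. To verify weak-domination, let $v_1v_2$ be any edge of $G$. If $\{v_1,v_2\}\cap\{u,v\}\ne\emptyset$, then one of $v_1,v_2$ is already in $A\subseteq Dom(A)$. Otherwise the four vertices $u,v,v_1,v_2$ are distinct, and if none of the four cross-pairs $\{uv_1,uv_2,vv_1,vv_2\}$ were an edge, then the induced subgraph on $\{u,v,v_1,v_2\}$ would be precisely $2K_2$, contradicting the hypothesis. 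Hence some cross edge exists, putting at least one of $v_1,v_2$ into $Dom(A)$, as required.

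There is essentially no obstacle: the whole content is the observation that in a $2K_2$-free graph any single edge already weakly-dominates every other edge, so the only work is handling the trivial $\omega=2$ case separately from Theorem~\ref{thm8}. It may be worth a brief remark that weak-domination is strictly weaker than domination (which is why the $\omega\ge 3$ clause of Theorem~\ref{thm8} can be weakened to $\omega\ge 2$ here), making the statement a clean common extension of Theorem~\ref{thm8} covering all graphs with at least one edge.
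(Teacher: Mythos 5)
Your proof is correct and follows essentially the same route as the paper: for $\omega(G)\ge3$ invoke Theorem~\ref{thm8} and note that a dominating clique is weakly-dominating, and for $\omega(G)=2$ observe that any edge together with a hypothetical undominated edge would induce a $2K_2$. Your treatment of the case where the two edges share a vertex is slightly more careful than the paper's, but the argument is the same.
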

\begin{proof}
If $\omega(G)\ge3$, by Theorem \ref{thm8}, we get a dominating clique, thus it is also a weakly-dominating clique. If $\omega(G)=2$, for any clique of size 2, say $Q_0=v_1v_2$ and any edge $v_3v_4 \in G$, if $v_1v_3,v_1v_4,v_2v_3,v_2v_4\not\in E(G)$, then $v_1v_2$ and $v_3v_4$ form a $2K_2$.
\end{proof}

In fact, in a $2K_2$-free graph $G$, any edge is weakly-dominating.
The following observation from \cite{broersma2014toughness} is very useful.
\begin{lemma}\label{lm1}
A graph $G=(V,E)$ is $2K_2$-free if and only if for every $A\subset V$ at most one component of the graph $G-A$ contains edges. \qed
\end{lemma}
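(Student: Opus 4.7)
The plan is to prove both directions by direct contradiction, using the definition of $2K_2$-free almost verbatim.

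For the ``if'' direction, I would assume $G$ contains an induced $2K_2$ on vertices $\{u_1,v_1,u_2,v_2\}$ with edges $u_1v_1$ and $u_2v_2$ and no other edges among these four vertices, and then pick the specific cut set $A=V\setminus\{u_1,v_1,u_2,v_2\}$. In $G-A$ the two edges $u_1v_1$ and $u_2v_2$ lie in distinct components (no cross edges by the induced $2K_2$ hypothesis), so two components of $G-A$ contain edges, contradicting the assumed property.

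For the ``only if'' direction, I would assume there exists a set $A\subset V$ such that $G-A$ has two distinct components $C_1$ and $C_2$ each containing an edge; pick $u_1v_1\in E(C_1)$ and $u_2v_2\in E(C_2)$. Since $C_1,C_2$ are different components of $G-A$, no edge of $G$ joins $\{u_1,v_1\}$ to $\{u_2,v_2\}$ (all four vertices lie in $V\setminus A$, so any such edge would be in $G-A$ and would merge the components). Therefore the subgraph of $G$ induced on $\{u_1,v_1,u_2,v_2\}$ is exactly the two disjoint edges $u_1v_1$ and $u_2v_2$, i.e.\ an induced $2K_2$, contradicting the $2K_2$-freeness of $G$.

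There is essentially no obstacle here; the whole content is a translation between ``induced $2K_2$'' and ``two components of some $G-A$ carry an edge.'' The only point that needs a line of care is verifying, in the ``only if'' direction, that the four chosen vertices really induce no further edges — which follows because any such edge would lie inside $G-A$ and connect $C_1$ to $C_2$, contradicting the fact that they are distinct components.
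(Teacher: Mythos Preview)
Your proof is correct. Note that the paper does not actually prove this lemma: it is quoted from \cite{broersma2014toughness} and marked with a \qed{} at the end of the statement, so there is no in-paper argument to compare against. Your argument is the natural one and would serve perfectly well as a self-contained proof.
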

Using these two properties as tools, we can look at $2K_2$-free graphs more closely.

Given a $2K_2$-free graph $G$, by Theorem \ref{thm8}, we can find one of its maximum weakly-dominating clique, namely $Q_1$. Obviously, any induced subgraph of a $2K_2$-free graph is again $2K_2$-free. Then $G-Q_1$ is also a $2K_2$-free graph. By Lemma 1, $G-Q_1$ is made up by two parts, one is an independent subset (possibly empty) of $G-Q_1$, denoted by $D_1$, another part is a non-trivial component (possibly empty), denoted by $G_1$, which is also $2K_2$-free. For the same reason, we can find a maximum weakly-dominating clique in $G_1$, namely $Q_2$, and a non-trivial component in $G_1-Q_2$, namely $G_2$ and an independent subset in $G_1-Q_2$, namely $D_2$. Repeating this process, we get
\begin{theorem}\label{thm9}
For a $2K_2$-free graph $G=G_0$, we can find a sequence of cliques $\{Q_i;i=1,\ldots,k\}$, where $|Q_i|\ge2$ and $|Q_i|\ge|Q_{i+1}|$, such that $Q_1$ is a maximum weakly-dominating clique in $G_0$ and $Q_{i+1}$ is a maximum weakly-dominating clique in $G_{i}\subset G_{i-1}-Q_{i} ,(i=1,\ldots,k-1)$. Additionally, $G_i$ is the only non-trivial component in $G_{i-1}-Q_i$. The subset $D_i=G_{i-1}-Q_i-G_i$ is an independent set. We call vertices in $D=\cup_{i=1,\ldots,k} D_i$ the first class vertices.  \qed
\end{theorem}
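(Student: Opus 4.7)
The plan is to define the sequence $Q_1,\ldots,Q_k$ and the subgraphs $G_1,\ldots,G_k$ by iteration, starting from $G_0 = G$. At each step $i \geq 1$ for which $G_{i-1}$ still contains an edge, I would (i) extract a maximum weakly-dominating clique $Q_i$ of $G_{i-1}$, (ii) set $G_i$ to be the unique non-trivial component of $G_{i-1} - Q_i$ (or the empty graph, if none exists), and (iii) let $D_i$ be the remaining vertices of $G_{i-1} - Q_i$. The iteration terminates with $k = i-1$ at the first index where $G_{i-1}$ is edgeless.

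For step (i), I would invoke Theorem~\ref{weak}. Since $G_{i-1}$ is an induced subgraph of $G$, it is $2K_2$-free; and the assumption that $G_{i-1}$ has an edge gives $\omega(G_{i-1}) \geq 2$, so Theorem~\ref{weak} supplies a weakly-dominating clique $Q_i$ in $G_{i-1}$ of size $\omega(G_{i-1}) \geq 2$. For step (ii), I would write
\[
G_{i-1} - Q_i \;=\; G - (Q_1 \cup \cdots \cup Q_i \cup D_1 \cup \cdots \cup D_{i-1}),
\]
so Lemma~\ref{lm1} applied to $G$ with this vertex set forces at most one component to contain an edge; this distinguished component is the $G_i$ I need. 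For step (iii), every vertex of $D_i$ then sits in a trivial component of $G_{i-1} - Q_i$, hence is isolated there, so no two vertices of $D_i$ are adjacent and $D_i$ is independent.

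The size inequality $|Q_i| \geq |Q_{i+1}|$ is automatic from the construction: $|Q_i| = \omega(G_{i-1})$ and $|Q_{i+1}| = \omega(G_i)$ by Theorem~\ref{weak}, and $G_i$ being an induced subgraph of $G_{i-1}$ forces $\omega(G_i) \leq \omega(G_{i-1})$. Since each round removes at least $|Q_i| \geq 2$ vertices, the process terminates after finitely many steps. There is no genuine obstacle: the theorem is essentially bookkeeping on top of Theorem~\ref{weak} and Lemma~\ref{lm1}. The only point I would explicitly verify is that the $2K_2$-freeness needed to invoke both of those inputs persists at every stage, which is immediate because the class of $2K_2$-free graphs is closed under taking induced subgraphs.
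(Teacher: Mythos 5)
Your proposal is correct and takes essentially the same route as the paper: the paper also builds the sequence iteratively, extracting a maximum weakly-dominating clique via Theorem~\ref{weak} at each stage and using Lemma~\ref{lm1} to see that $G_{i-1}-Q_i$ splits into one non-trivial component $G_i$ and an independent remainder $D_i$, with $2K_2$-freeness preserved under induced subgraphs. Your explicit check that $|Q_i|\ge|Q_{i+1}|$ follows from $\omega(G_i)\le\omega(G_{i-1})$ is the same (largely implicit) bookkeeping the paper relies on.
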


In addition, we get
\begin{theorem}\label{thm10}
A $2K_2$-free graph $G$ can be divided into two parts, a ``clique tower'' $Q=\cup_{i=1,\ldots,k}Q_i$ and an independent set $D=\cup_{i=1,\ldots,k}D_i$, where $V(G)=V(Q)\cup V(D)$ and $V(Q)\cap V(D)=\emptyset$. Note that we allow any of these sets to be empty. \qed
\end{theorem}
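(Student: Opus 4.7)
The plan is to obtain Theorem \ref{thm10} as a direct repackaging of Theorem \ref{thm9}: iterate the construction of Theorem \ref{thm9} until it cannot be continued, and then check that the pieces fit together as claimed. There are three things to verify: termination, that the pieces partition $V(G)$, and that the union of the $D_i$ remains independent when viewed inside $G$ (not merely inside each $G_{i-1}-Q_i$).

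First I would address termination. At every step that can be carried out, Theorem \ref{thm9} provides a clique $Q_i$ of size at least $2$, an independent set $D_i$ of vertices isolated in $G_{i-1}-Q_i$, and a non-trivial component $G_i$ of $G_{i-1}-Q_i$. Because $|Q_i|\ge 2$, we have $|V(G_i)|\le|V(G_{i-1})|-2$, so the chain $G_0\supset G_1\supset\cdots$ must terminate at some step $k$ with $G_k$ empty. If instead $G$ has no edges at all, then $\omega(G)<2$, the procedure does not even start, and we simply take $Q=\emptyset$ and $D=V(G)$, so the conclusion is immediate.

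Next I would verify the partition. At every step the three sets $Q_i$, $D_i$, $V(G_i)$ are pairwise disjoint and cover $V(G_{i-1})$ by construction. Telescoping this identity for $i=1,\ldots,k$ and using $V(G_k)=\emptyset$ yields
\[
V(G)=\bigcup_{i=1}^{k}Q_i\;\cup\;\bigcup_{i=1}^{k}D_i = V(Q)\cup V(D),
\]
with the union disjoint, since the $Q_i$ and $D_i$ appearing later lie inside strictly smaller subgraphs than those appearing earlier.

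Finally I would check that $D$ is independent in $G$. Each $D_i$ is independent by Theorem \ref{thm9}. For indices $i<j$ and any $v\in D_i$, the defining property of $D_i$ is that $v$ is isolated in $G_{i-1}-Q_i$, so $v$ has no neighbour in $V(G_i)$; since $D_j\subseteq V(G_{j-1})\subseteq V(G_i)$, no edge of $G$ joins $v$ to any vertex of $D_j$. Hence $D=\bigcup_i D_i$ is independent. There is no real obstacle: the structural content has been absorbed into Theorem \ref{thm9} (and, behind it, Lemma \ref{lm1} and Theorem \ref{thm8}); the only care required is the bookkeeping for termination and for the cross-level independence of the $D_i$, both of which follow at once from the definition of $D_i$ as the isolated part of $G_{i-1}-Q_i$.
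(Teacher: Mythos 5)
Your proposal is correct and follows exactly the route the paper intends: Theorem \ref{thm10} is stated with no separate proof precisely because it is the repackaging of the iterative construction of Theorem \ref{thm9}, which is what you carry out. Your added bookkeeping (termination via $|Q_i|\ge2$, the telescoping partition, and the cross-level independence of $\bigcup_i D_i$ from the fact that each $D_i$ is isolated in $G_{i-1}-Q_i$) just makes explicit what the paper leaves implicit, with no divergence in method.
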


\section{The proof of the main result}

In short, the proof is divided into two parts. In the first part, we construct an auxiliary graph $\Gamma$, which is an Eulerian (multi)graph. There are two kinds of edges in $\Gamma$, namely blue edges and red edges. In the second part, with the help of $\Gamma$, we find a subgraph $H$ of $2*G$, which is Eulerian and with all vertices of degree 2 or 4, where $2*G$ denotes the multigraph obtained by doubling each edge of $G$ into a pair of parallel edges. $H$ is the 2-walk we want. In $H$, there are three kinds of edges, namely first-class edges, second-class-edges and third-class edges. The first-class edges are corresponding to the blue edges in $\Gamma$, the second-class edges are corresponding to red edges in $\Gamma$, and the third-class edges are used to make sure $H$ is connected and to adjust the vertex degrees to 2 or 4.

\subsection{The proof of Theorem \ref{thm7}}

Let $G$ be a 2-tough $2K_2$-free graph. If it has only 2 vertices, this is a trivial case and nothing to prove. So, we assume there are at least three vertices. The sequence of cliques defined in Theorem \ref{thm9} is $\{Q_1,\ldots,Q_k\}$, where each $Q_i$ is weakly-dominated by $Q_j$, when $j<i$, and $|Q_i|\ge|Q_{i+1}|\ge2$.

First, if $|Q_1|=2$, then $G$ is triangle-free, then, by \cite[Theorem 4]{broersma2014toughness}, $G$ is Hamiltonian, and thus $G$ has a 2-walk.
Additionally, if there is only one clique $Q_1$ in the sequence, i.e. $k=1$, then $G$ is split graph. By \cite[Theorem 3.3]{ kratsch1996toughness}, $G$ is Hamiltonian, and thus has a 2-walk.

Now, we assume $k\ge2$ and $|Q_1|\ge3$. The independent set (also called the
first-class vertex set) is denoted by $D$, as in Theorem \ref{thm9}.  Let $D_0\subseteq D$. 
By 2-toughness, the size of the neighbor
set $N_G(D_0)$, of $D_0$ in $Q$ is at least $2|D_0|$, i.e.
$|N_G(D_0)|\ge2|D_0|$, otherwise, after deleting $N(D_0)$, there are at least
$|D_0|$ components (isolated vertices), since $D_0$ is an independent set.  By
the polygamous form of Hall theorem, there is a subset $Q'$ of $Q$,
$|Q'|=2|D|$, and each vertex in $D$ is adjacent to two distinct vertices in
$Q'$. That means there is a subset $E'\subset E(G)$ where each $e\in E'$ is
connected to a vertex in $D$ and a vertex in $Q'$. Moreover, each vertex in
$Q'$ is incident to exactly one edge in $E'$, and each vertex in $D$ is
incident to exactly two edges in $E'$. We call the edges in $E'$ the {\bf
first-class edges} in $G$, (also in $H$, we will see that later).

\subsubsection*{The construction of $\Gamma$}

Now, let us construct the auxiliary graph $\Gamma$.  First, the vertex set of
$\Gamma$, $V(\Gamma)=\{w_1,\ldots,w_k\}\cup D'$, each $w_i$ corresponds to the
clique $Q_i$, and each $v'_j\in D'$ corresponds to  $v_j\in D$.  For each
first-class edge $e\in E(G)$, we draw the corresponding edge $e'$ on $\Gamma$,
i.e., if $e=v_iv_j\in E(G)$, with $v_i\in D$ and $v_j\in Q_t$, then we have
$v'_iw_t\in E(\Gamma)$. We call it a {\bf blue edge} in $\Gamma$. Note
that we allow parallel edges in $\Gamma$, and degrees of
vertices count parallel edges with multiplicity.

After drawing the blue edges on $\Gamma$, let us look at the components of
$\Gamma$ (some of them may be trivial components, i.e. single points).
Obviously, each component has even number of vertices with odd degree.

\subsubsection*{Case 1} 
If there is only one component, then pair up all the
odd degree vertices by adding edges, that means drawing a maximum matching
between these odd degree vertices. And the edges in the matching are called the
{\bf red edges}. Then $\Gamma$ becomes an Eulerian graph, with each vertex
incident to at most one red edge. And note that only vertices $\{w_i\}$ are
possibly incident to a red edge.

\subsubsection*{Case 2} 
If there are at least two components, say $C_1,\ldots,C_n$, where $n\ge2$,
let us select some {\bf representative vertices} in each of them. 
For any component, say $C_i$,
with some odd degree vertices, select two odd degree vertices, denoted by
$v_i^+$, $v_i^-$, from them as representative vertices. Note that they
are not in $D'$, since all vertices in $D'$ have degree 2. On the other hand,
for any component, say $C_j$ with only even degree vertices, select one vertex,
denoted by $v_j$, as representative vertex. Note that we can require
$v_j\not\in D'$, since each component has at least one vertex $w_t$. For
convenience, we denote $v_j^-=v_j^+=v_j$ in this case. Then, we draw
$v_1^+v_2^-,~v_2^+v_3^-,\ldots,v_{n-1}^+v_n^-,~v_n^+v_1^-$ on $\Gamma$ (if
$n=2$ and both components have only even degree vertices, this circle is a pair
of parallel edges $v_1^+v_2^-,v_2^+v_1^-$), and we also call them the {\bf red
edges}. For the odd degree vertices which are not selected as representative
vertices, we pair them up within their components, that means drawing a
matching on these vertices with all these matching edges do not cross different
components. These matching edges are also called the {\bf red edges}.

Checking {\bf Cases 1 and 2}, we find that for each vertex $w_i$ one
of the following holds.

\begin{enumerate}
\item $w_i$ is not incident to any red edges.
\item $w_i$ is incident to one red edge as representative vertex.
\item $w_i$ is incident to two red edges as representative vertex.
\item $w_i$ is incident to one red edge for pairing up.
\end{enumerate}

Now, $\Gamma$ is an Eulerian (multi)graph, since it is a connected graph with
all vertices of even degree. Note that all red edges are only incident
to vertices $\{w_i\}$, and each vertex in $D'$ is incident to exactly two blue
edges.

\subsubsection*{The construction of $H$}
Now with the help of $\Gamma$ we are going to find a spanning subgraph $H$ of $2*G$.

First, the vertex set of $H$, is set to be the vertex set of $G$. So, we use the same notation for these vertices in $H$ and $G$. Additionally, when we talk about any $Q_i$ in $G$, we mean the clique $Q_i$, on the other hand, when we talk about $Q_i$ in $H$, we are talking about the subset of vertices.

{\bf Step 1.} Add all the first-class edges connecting $Q'$ and $D$ to $H$. That means the first-class edges in $G$ and the first-class edges in $H$ are exactly the same set.

{\bf Step 2.} (Finding second-class edges in $H$.)

For each red edge $w_iw_j\in\Gamma$, (we assume $i<j$), we want to find an edge
in $G$, with one endpoint in $Q_i$ and another in $Q_j$. This is quite an easy
task. By Theorem \ref{thm9}, $Q_j$ is weakly-dominated by $Q_i$, that means,
in $G$, some vertices in $Q_j$ are adjacent to some vertices in $Q_i$. Now, we
can arbitrarily select one edge, with one endpoint in $Q_j$ while another in
$Q_i$, from $G$ into $H$. Such edges are called {\bf second-class edges}.

{\bf Step 3.} (Finding third-class edges in $H$.)
The final step in the construction is to find some so-called {\bf third-class
edges} for $H$. After adding all the first-class edges and all the
second-class edges, let us look at each $Q_i$.

In any $Q_i$, the sum of vertex degrees (corresponding to first-class and
second-class edges) is obviously even, because $\Gamma$ is an Eulerian graph,
and the sum of vertex degrees in $Q_i$ equals the degree of $w_i$ in $\Gamma$.
So, in $Q_i$, there must be even number of odd degree vertices, if any. 

In $Q_i$ there is an even number (maybe 0) of odd degree (in $H$) vertices, and by
properties of $\Gamma$ and by construction of $H$ carried so far at most one of
these vertices, say $v_x$, has degree 3, while the rest of them have degree 1.
Now, select a maximum matching, of {\bf third-class edges}, on them. Now $v_x$,
if it exist at all, has degree 4, and the remaining vertices in $Q_i$ have
degree 0 or 2.

Now we connect all the vertices in $Q_i$ (except $v_x$, if it exists), by a circle of {\bf third-class edges}.
More precisely, if $v_x$ is present, and $|Q_i|=2$, we do nothing, and in case we need to connect
just two vertices by a circle we use a pair of parallel edges.

After drawing all these first-, second-, and third-class edges on $H$, we observe that:
\begin{enumerate}
\item $H$ is connected multigraph.
\item $H$ is a subgraph of $2*G$.
\item each vertex in $H$ has degree 2 or 4, where the degree of parallel edges is counted with multiplicity.
\end{enumerate}

We conclude that $H$ is Eulerian, and  $H$ is a 2-walk in $G$. \qed

\medskip
 
We conclude this paper  with the following
\begin{conjecture}
Every 2-tough $2K_2$-free graph on at least three vertices has a 2-trail,
i.e.  a 2-walk with each edge appearing in the walk at most once.
\end{conjecture}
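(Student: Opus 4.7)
The plan is to refine the construction of $H$ in the proof of Theorem \ref{thm7} so that every edge of $G$ is used at most once, i.e., $H$ is a subgraph of $G$ rather than of $2*G$. Parallel edges arise in exactly three identifiable places in the given construction: (i) the pair of parallel red edges in $\Gamma$ when Case~2 applies with $n=2$ and both components are even, (ii) the ``circle'' of third-class edges on $Q_i$ when $|Q_i|=2$, and (iii) an accidental coincidence between a newly added edge and one already in $H$. The strategy is to track every edge as it is inserted into $H$ and, at each step, exploit the flexibility available (choice of matching, choice of representative vertices, choice of second-class edges, choice of spanning circles inside cliques) to keep $H$ simple.

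First I would recheck Step~1: since the polygamous Hall theorem supplies, for each $v\in D$, two \emph{distinct} neighbours in $Q$, the first-class edges are automatically pairwise distinct. In Step~2, a red edge $w_iw_j$ translates to a $G$-edge between $Q_i$ and $Q_j$; because $Q_i$ weakly-dominates $Q_j$, many such edges usually exist, and the dangerous case is a parallel red edge with $|Q_i|=|Q_j|=2$. I would strengthen Step~2 by requiring, whenever $\Gamma$ carries a parallel red edge $w_iw_j$, that two genuinely distinct $G$-edges be chosen between $Q_i$ and $Q_j$; and I would argue that the parallel red edges of type~(i) can be avoided altogether by choosing the representative vertices of Case~2 more carefully (for instance, so that the representatives of the two even components do not share a common bridging partner).

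For Step~3, whenever $|Q_i|\ge 3$ the ``circle of third-class edges'' can be realised as a genuine simple Hamiltonian cycle of the relevant subclique, which exists because a complete graph on at least three vertices is Hamiltonian, so no parallel edges are introduced. The truly delicate case is $|Q_i|=2$. Here I would argue via toughness: if $Q_i=\{u,v\}$ and both $u,v$ already have $H$-degree~$2$ through incident first- or second-class edges, then $Q_i$ has been traversed and no third-class closing edge is required; only when $Q_i$ supplies a single blue edge does a parallel third-class edge threaten to appear, and I would reroute such a contribution through $Q_{i-1}$ using the weak-domination guarantee of Theorem~\ref{thm9}.

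The main obstacle I anticipate is precisely this small-clique case. The hypothesis $\tau(G)\ge 2$ controls vertex counts ($|N_G(D_0)|\ge 2|D_0|$) but does not directly forbid a size-$2$ clique $Q_i$ from being used by several first-class vertices simultaneously; rerouting then requires a subtle balance between the number of blue edges incident to $w_i$ in $\Gamma$ and the structural constraints imposed by $2K_2$-freeness. If this accounting does not close under $\tau(G)=2$ alone, a plausible intermediate step is to first settle the conjecture under $\tau(G)\ge 2+\epsilon$ and then attempt a limit or perturbation argument; alternatively, an edge-exchange procedure that removes one duplicated edge at a time, exploiting the density guaranteed by Lemma~\ref{lm1}, should succeed provided one can show each exchange strictly decreases the multiset of duplicated edges.
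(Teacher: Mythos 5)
This statement is not a theorem of the paper: it is the closing \emph{conjecture}, explicitly left open, so there is no proof of record to compare against, and your proposal must be judged on its own merits. Judged that way, it is a plan with genuine gaps rather than a proof. The most concrete one is your treatment of the forced parallel edges. In Case~2 with $n=2$ and both components even, the two red edges \emph{must} be attached to the same representative vertex in each component, since attaching them to two distinct vertices of an all-even component would create two odd-degree vertices and destroy the Eulerian property of $\Gamma$; so your suggestion to ``choose the representative vertices more carefully'' cannot eliminate the parallel red edge, and the problem is pushed downstream to Step~2, where you now need two \emph{distinct} $G$-edges between $Q_i$ and $Q_j$. But weak domination (Theorem~\ref{weak}, Theorem~\ref{thm9}) guarantees only \emph{one} such edge, and nothing in the proposal supplies a second; when $|Q_i|=|Q_j|=2$ there may genuinely be only one edge between the cliques. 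You simply ``require'' the two distinct edges without proving they exist.

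Two further steps fail as stated. First, your claim that $|Q_i|\ge 3$ is safe is wrong at $|Q_i|=3$: if two vertices of a triangle are matched by a third-class edge, the connecting circle on the three vertices necessarily reuses that same edge, since the Hamiltonian cycle of $K_3$ uses \emph{every} edge of the triangle; avoiding a prescribed matching inside a Hamiltonian cycle of $K_m$ is possible only for $m\ge 4$. (This is repairable -- e.g.\ replacing the matching edge $ab$ by a path $a$--$c$--$b$ -- but you would then have to redo the degree bookkeeping, and the proposal does not.) Second, in the delicate case $|Q_i|=2$ you assert that if both vertices already have $H$-degree~$2$ then ``$Q_i$ has been traversed and no third-class closing edge is required.'' That confuses degree with connectivity: $\Gamma$ certifies connectivity of $H$ only after each clique is contracted to the single vertex $w_i$, and inside a clique it is precisely the third-class circle that joins the separate trail-threads passing through different vertices of $Q_i$. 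Two degree-$2$ vertices of $Q_i$ may lie in different components of $H$, so omitting the circle (or its parallel-pair substitute) can disconnect $H$. Finally, the fallback suggestions are not arguments: toughness of a \emph{fixed} graph cannot be perturbed from $2$ to $2+\epsilon$, and the edge-exchange procedure comes with no potential function or proof that an exchange is always available, which is exactly the open content of the conjecture.
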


\bibliography{reftough}
\bibliographystyle{abbrv}
\end{document}